\newcounter{satznum}
\newtheorem{theorem}{Theorem}[satznum]
\newtheorem{lemma}[theorem]{Lemma}
\newtheorem{corollary}[theorem]{Corollary}
\newenvironment{acknowledgement}
 {\begin{trivlist}\item[]{\bf Acknowledgement.}}
 {\end{trivlist}}
\newenvironment{remark}
 {\begin{trivlist}\item[]{\bf Remark.}}
 {\end{trivlist}}
\newenvironment{remarks}
 {\begin{trivlist}\item[]{\bf Remarks.}}
 {\end{trivlist}}
\newenvironment{example}
 {\begin{trivlist}\item[]{\bf Example.}}
 {\end{trivlist}}
\newenvironment{proof}
 {\begin{trivlist}\item[]{\bf Proof.}}
 {\end{trivlist}}
\gdef\cz{{\mathbb C}} 
\gdef\nz{{\mathbb N}} 
\gdef\rz{{\mathbb R}} 
\newcounter{todocounter}
\def\@MRExtract#1 #2!{#1}
\newcommand{\MR}[1]{
  \xdef\@MRSTRIP{\@MRExtract#1 !}
  \href{http://www.ams.org/mathscinet-getitem?mr=\@MRSTRIP}{MR\@MRSTRIP}}
\gdef\pr{{\mathbb P}}
\gdef\me{{\mathbb E}}
\begin{document}
   \section*{ON THE EXTERNAL BRANCHES OF COALESCENTS WITH MULTIPLE
   COLLISIONS} 
   {\sc J.-S. Dhersin}\footnote{Department de Mathematique,  Insitut
   Galil\'ee, Universit\'e Paris 13, Avenue Jean Baptiste Cl\'ement, 93430 Villetaneuse, France,
   E-mail: dhersin@math.univ-paris13.fr}
   and {\sc M. M\"ohle}\footnote{Mathematisches Institut, Eberhard Karls
   Universit\"at T\"ubingen, Auf der Morgenstelle 10, 72076 T\"ubingen,
   Germany, E-mail address: martin.moehle@uni-tuebingen.de}
   \hfill\today
%
\begin{abstract}
   A recursion for the joint moments of the external branch lengths
   for coalescents with multiple collisions ($\Lambda$-coalescents) is
   provided. This recursion is used to derive asymptotic results as
   the sample size $n$ tends to infinity for the joint moments of the
   external branch lengths and for the moments of the total
   external branch length of the Bolthausen--Sznitman coalescent.
   These asymptotic results are based on a differential equation approach,
   which is as well useful to obtain exact solutions for the joint moments of
   the external branch lengths for the Bolthausen--Sznitman coalescent.
   The results for example show that the lengths of two randomly chosen
   external branches are positively correlated for the Bolthausen--Sznitman
   coalescent, whereas they are negatively correlated for the
   Kingman coalescent provided that $n\ge 4$.

   \vspace{1mm}

   \noindent Keywords: Asymptotic expansions;
   Bolthausen--Sznitman coalescent; external branches; joint moments;
   Kingman coalescent; multiple collisions

   \vspace{1mm}

   \noindent 2010 Mathematics Subject Classification:
            Primary 60J25; 
                    34E05; 
                    60C05  
            Secondary
                      60J85; 
                      92D15;  
                      92D25   
\end{abstract}
\subsection{Introduction and main results} \label{intro}
Let $\Pi=(\Pi_t)_{t\ge 0}$ be a coalescent process with multiple collisions
($\Lambda$-coalescent). For fundamental information on
$\Lambda$-coalescents we refer the reader to \cite{pitman} and
\cite{sagitov}. For $n\in\nz:=\{1,2,\ldots\}$ we denote with
$\Pi^{(n)}=(\Pi^{(n)}_t)_{t\ge 0}$ the coalescent process restricted
to $[n]:=\{1,\ldots,n\}$. Note that $\Pi^{(n)}$ is Markovian with state
space ${\cal E}_n$, the set of all equivalence relations (partitions) on
$[n]$. For $\xi\in{\cal E}_n$ we write $|\xi|$ for the number of equivalence
classes (blocks) of $\xi$. For $m\in\{1,\ldots,n-1\}$ let $g_{nm}$ be the
rate at which the block counting process $N^{(n)}:=(N^{(n)}_t)_{t\ge 0}
:=(|\Pi^{(n)}_t|)_{t\ge 0}$ jumps at its first jump time from $n$ to $m$.
It is well known (see, for example, \cite[Eq.~(13)]{moehleewens}) that
\begin{equation} \label{rates}
   g_{nm}\ =\ {n\choose{m-1}}\int_{[0,1]}\,x^{n-m-1}(1-x)^{m-1}\,\Lambda({\rm d}x)
\end{equation}
for all $n,m\in\nz$ with $m<n$.
We furthermore introduce the total rates
\begin{equation} \label{totalrates}
   g_n\ :=\ \sum_{m=1}^{n-1}g_{nm}
   \ =\ \int_{[0,1]}\frac{1-(1-x)^n-nx(1-x)^{n-1}}{x^2}\,\Lambda({\rm d}x),\qquad n\in\nz.
\end{equation}
We are interested in the external branches of the restricted
coalescent process $\Pi^{(n)}$. More precisely, for $n\in\nz$ and
$i\in\{1,\ldots,n\}$ let $\tau_{n,i}:=\inf\{t>0:\mbox{$\{i\}$ is a
singleton block of $\Pi_t^{(n)}$}\}$ denote the length of the
$i$th external branch of the restricted coalescent $\Pi^{(n)}$.
Note that $\tau_{1,1}=0$. Our first main result (Theorem
\ref{theorem11}) provides a general recursion for the joint
moments
\begin{equation} \label{moments}
   \mu_n(k_1,\ldots,k_j)
   \ :=\ \me(\tau_{n,1}^{k_1}\cdots\tau_{n,j}^{k_j}),
   \qquad j\in\{1,\ldots,n\}, k_1,\ldots,k_j\in\nz_0:=\{0,1,\ldots\},
\end{equation}
of the external branch lengths. The proof of Theorem \ref{theorem11} is
provided in Section \ref{proofs}.
\begin{theorem}[Recursion for the joint moments of the external branch lengths]
\label{theorem11}
   \ \\
   For all $n\ge 2$, $j\in\{1,\ldots,n\}$ and
   $k=(k_1,\ldots,k_j)\in\nz^j$ the joint moments
   $\mu_n(k):=\me(\tau_{n,1}^{k_1}\cdots\tau_{n,j}^{k_j})$
   of the lengths $\tau_{n,1},\ldots,\tau_{n,n}$ of the external branches
   of a $\Lambda$-coalescent $\Pi^{(n)}$ satisfy the recursion
   \begin{equation} \label{generalrec}
      \mu_n(k)
      \ = \ \frac{1}{g_n}\sum_{i=1}^j k_i\,
            \mu_n(k-e_i)
            + \sum_{m=j+1}^{n-1}p_{nm}\frac{(m-1)_j}{(n)_j}\,
            \mu_m(k),
   \end{equation}
   where $e_i$, $i\in\{1,\ldots,j\}$, denotes the $i$th
   unit vector in $\rz^j$, $p_{nm}:=g_{nm}/g_n$ and $g_{nm}$ and $g_n$ are
   defined via (\ref{rates}) and (\ref{totalrates}).
\end{theorem}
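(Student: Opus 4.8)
The plan is to run a first-step analysis on the first jump of the block counting process $N^{(n)}$ and to combine it with an elementary integration-by-parts identity for the exponential holding time; the latter is what produces the clean two-term shape of (\ref{generalrec}). Write $T:=\inf\{t>0:N_t^{(n)}<n\}$ for the first jump time. Started from the partition of $[n]$ into singletons, $T$ is exponentially distributed with parameter $g_n$ and, by the Markov property, independent of the post-jump process $(\Pi^{(n)}_{T+s})_{s\ge0}$. For $i\in\{1,\ldots,j\}$ I decompose $\tau_{n,i}=T+\tau_{n,i}'$, where $\tau_{n,i}'$ is the residual external branch length of $\{i\}$ after the first jump: $\tau_{n,i}'=0$ if $\{i\}$ lies in the block created at the first collision, and otherwise $\tau_{n,i}'$ is the external branch length of the surviving singleton $\{i\}$ in the post-jump coalescent. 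In particular $(\tau_{n,1}',\ldots,\tau_{n,j}')$ is independent of $T$.

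First I record that, for $T$ exponential with parameter $g_n$ and any polynomial $f$, one has $\me(f(T))=f(0)+\frac{1}{g_n}\me(f'(T))$, which follows from a single integration by parts in $\int_0^\infty f(t)g_ne^{-g_nt}\,{\rm d}t$ using $g_ne^{-g_nt}=-\frac{{\rm d}}{{\rm d}t}e^{-g_nt}$. Conditioning on $(\tau_{n,1}',\ldots,\tau_{n,j}')$ (allowed by the independence above) and applying this to $f(t):=\prod_{i=1}^j(t+\tau_{n,i}')^{k_i}$, whose derivative is $f'(t)=\sum_{i=1}^jk_i(t+\tau_{n,i}')^{k_i-1}\prod_{i'\ne i}(t+\tau_{n,i'}')^{k_{i'}}$, and then taking expectations while using $T+\tau_{n,i}'=\tau_{n,i}$, I obtain
\[
   \mu_n(k)\ =\ \me\Big(\prod_{i=1}^j(\tau_{n,i}')^{k_i}\Big)
   \ +\ \frac{1}{g_n}\sum_{i=1}^jk_i\,\mu_n(k-e_i).
\]
This already yields the first sum in (\ref{generalrec}).

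It remains to identify the term $\me(\prod_{i=1}^j(\tau_{n,i}')^{k_i})$. Since every $k_i\ge1$, this product vanishes unless all of $\{1\},\ldots,\{j\}$ survive the first collision, i.e.\ unless none of $1,\ldots,j$ belongs to the merged block. I condition on $N_T^{(n)}=m$, which has probability $p_{nm}$. In a jump from $n$ to $m$ blocks exactly $n-m+1$ singletons merge, so by exchangeability the $m-1$ surviving singletons form a uniformly chosen $(m-1)$-subset of $[n]$; hence $1,\ldots,j$ all survive with probability $\frac{{n-j\choose m-1-j}}{{n\choose m-1}}=\frac{(m-1)_j}{(n)_j}$, which vanishes for $m\le j$ and so restricts the sum to $m\ge j+1$. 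Conditionally on this survival event the tagged singletons evolve inside a $\Lambda$-coalescent with $m$ blocks, and since an external branch length is the time until the tagged block first takes part in a collision, a quantity determined solely by the exchangeable block-counting dynamics and not by the contents of the blocks, the residual lengths have the joint moment $\mu_m(k)$. Summing over $m$ gives $\me(\prod_{i=1}^j(\tau_{n,i}')^{k_i})=\sum_{m=j+1}^{n-1}p_{nm}\frac{(m-1)_j}{(n)_j}\mu_m(k)$, which completes (\ref{generalrec}).

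I expect the main difficulty to be the justification in the last step rather than any single estimate: one must argue, via the consistency of $\Lambda$-coalescents under restriction, that conditionally on the first jump and on the survival of the tagged singletons the residual external branches have exactly the law of external branches of a fresh $m$-coalescent, and one must check the combinatorial identity $\frac{{n-j\choose m-1-j}}{{n\choose m-1}}=\frac{(m-1)_j}{(n)_j}$ together with its vanishing for $m\le j$. The conceptual crux is the exponential integration-by-parts identity, which collapses the binomial expansion of each $(T+\tau_{n,i}')^{k_i}$ into the single shift term $\frac{1}{g_n}\sum_{i=1}^jk_i\mu_n(k-e_i)$ instead of a full convolution over lower-order moments.
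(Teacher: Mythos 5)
Your argument is correct and follows essentially the same route as the paper: a first-step decomposition at the first jump time $T$, the observation that the product of residual lengths vanishes unless none of the individuals $1,\ldots,j$ takes part in the first collision (an event of conditional probability $\binom{n-j}{m-1-j}/\binom{n}{m-1}=(m-1)_j/(n)_j$ given that the block count drops to $m$), and the strong Markov property together with exchangeability to identify the residual joint moments with $\mu_m(k)$. The only difference is technical rather than conceptual: where the paper expands $\me(\tau_{n,1}^{k_1}\cdots\tau_{n,j}^{k_j}1_{\{T>h\}})$ and $\me(\tau_{n,1}^{k_1}\cdots\tau_{n,j}^{k_j}1_{\{T\le h\}})$ to first order in $h$ and lets $h\to 0$, you use the exact identity $\me(f(T))=f(0)+g_n^{-1}\me(f'(T))$ for the exponential holding time, a valid and arguably cleaner way to produce the term $g_n^{-1}\sum_{i=1}^j k_i\,\mu_n(k-e_i)$.
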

\begin{remarks}
   The recursion (\ref{generalrec}) works as follows. Let
   us call $d:=k_1+\cdots+k_j$ the order (or degree) of the moment
   $\mu_n(k_1,\ldots,k_j)$. Provided that all the moments of order
   $d-1$ are already known, (\ref{generalrec}) is a recursion
   on $n$ for the joint moments of order $d$, which can be solved
   iteratively. So one starts with $d=1$ (and hence $j=1$), in which
   case (\ref{generalrec}) reduces to
   $\mu_n(1)=1/g_n+\sum_{m=2}^{n-1}p_{nm}((m-1)/n)\mu_m(1)$,
   $n\ge 2$. Since $\mu_2(1)=\me(\tau_{2,1})=1/g_2
   =1/\Lambda([0,1])$, this recursion determines the moments of order
   $1$ completely. Now choose $d=2$ in (\ref{generalrec}) leading
   to a recursion for the second order moments. Iteratively, one can
   move to higher orders.
   Note that for $j=2$ and $k_1=k_2=1$ the recursion (\ref{generalrec})
   reduces to
   \begin{equation} \label{tau12rec}
      \me(\tau_{n,1}\tau_{n,2})
      \ =\ \frac{2}{g_n}\me(\tau_{n,1}) +
      \sum_{m=2}^{n-1} p_{nm}\frac{(m-1)_2}{(n)_2}
      \me(\tau_{m,1}\tau_{m,2}),
      \quad n\in\{2,3,\ldots\}.
   \end{equation}
\end{remarks}
Note that Theorem \ref{theorem11} holds for arbitrary
$\Lambda$-coalescents. For particular $\Lambda$-coalescents the
recursion (\ref{generalrec}) can be used to derive exact solutions
and asymptotic expansions for the joint moments of the lengths of
the external branches. In the following we briefly discuss the
star-shaped coalescent and the Kingman coalescent. Afterwards we
intensively study the 
Bolthausen--Sznitman coalescent. For related results on external
branches for beta-coalescents we refer the reader to
\cite{dhersinfreundsirijegousseyuan}, \cite{dhersinyuan} and
\cite{kerstingstanciuwakolbinger}.
\begin{example} (Star-shaped coalescent)
   For the star-shaped coalescent, where $\Lambda$ is the Dirac measure
   at $1$, the time $T_n$ of the first jump of $\Pi^{(n)}$ is
   exponentially distributed with parameter $g_n=1$, $n\in\{2,3,\ldots\}$.
   Furthermore, $p_{nm}=\delta_{m1}$ for $n,m\in\nz$ with $m<n$. Thus,
   (\ref{generalrec}) reduces to $\mu_n(k)=\sum_{i=1}^j k_i\,\mu_n(k-e_i)$
   with solution $\mu_n(k)=(k_1+\cdots+k_j)!$, which is obviously correct,
   since $\tau_{n,i}=T_n$ for all $i\in\{1,\ldots,n\}$ and, therefore,
   $\mu_n(k)=\me(T_n^{k_1+\cdots+k_j})=(k_1+\cdots+k_j)!$,
   $n\ge 2$, $j\in\{1,\ldots,n\}$, $k_1,\ldots,k_j\in\nz$.
\end{example}
\begin{example} (Kingman coalescent)
   For the Kingman coalescent \cite{kingman}, where $\Lambda$ is the
   Dirac measure at $0$, the time $T_n$ of the first jump of $\Pi^{(n)}$
   is exponentially distributed with parameter $g_n=n(n-1)/2$,
   $n\in\{2,3,\ldots\}$. Furthermore, $p_{nm}=\delta_{m,n-1}$ for $m,n\in\nz$
   with $m<n$. Caliebe et al. \cite[Theorem~1]{caliebeneiningerkrawczakroesler}
   verified that $n\tau_{n,1}\to Z$ in distribution as $n\to\infty$, where $Z$
   has density $x\mapsto 8/(2+x)^3$, $x\ge 0$. Janson and Kersting
   \cite[Theorem~1]{jansonkersting} showed that the total external branch
   length $L_n^{external}:=\sum_{i=1}^n\tau_{n,i}$ satisfies
   $(1/2)\sqrt{n/(\log n)}(L_n^{external}-2)\to N(0,1)$ in distribution as
   $n\to\infty$. We are instead interested here in the moments of
   $\tau_{n,1}$. The recursion (\ref{generalrec}) for $j=1$ reduces to
   $$
   \mu_n(k)
   \ =\ \frac{2k}{n(n-1)}\,\mu_n(k-1)
   + \frac{n-2}{n}\,\mu_{n-1}(k),\qquad n\in\{2,3,\ldots\},
   k\in\nz.
   $$
   Rewriting this recursion in terms of $a_n(k):=n(n-1)\mu_n(k)$
   yields $a_n(k)=2k\,\mu_n(k-1)+a_{n-1}(k)$, $n\in\{2,3,\ldots\}$,
   $k\in\nz$, with solution $a_n(k)=2k\sum_{m=2}^n\mu_m(k-1)$.
   Thus,
   $$
   \mu_n(k)\ =\ \frac{2k}{n(n-1)}\sum_{m=2}^n \mu_m(k-1),
   \qquad n\in\{2,3,\ldots\}, k\in\nz.
   $$
   The first two moments are therefore
   $\me(\tau_{n,1})=\mu_n(1)=2/(n(n-1))\sum_{m=2}^n 1=2/n$ and
   $$
   \me(\tau_{n,1}^2)\ =\ \mu_n(2)
   \ =\ \frac{4}{n(n-1)}\sum_{m=2}^n \frac{2}{m}
   \ =\ \frac{8(h_n-1)}{n(n-1)}
   \ =\ 8\frac{\log n}{n^2} + \frac{8(\gamma-1)}{n^2} + O\bigg(\frac{\log n}{n^3}\bigg),
   $$
   where $\gamma\approx 0.577216$ denotes the Euler constant and
   $h_n:=\sum_{i=1}^n 1/i$ the $n$-th harmonic number, $n\in\nz$. Note that
   these results are in agreement with those of Caliebe et al.
   \cite[Eq.~(2)]{caliebeneiningerkrawczakroesler} and Janson and
   Kersting \cite[p.~2205]{jansonkersting}. For the third moment we obtain
   $$
   \mu_n(3)
   \ =\ \frac{6}{n(n-1)}\sum_{m=2}^n \frac{8(h_m-1)}{m(m-1)}
   \ =\ \frac{48}{n(n-1)}\sum_{m=2}^n \frac{h_m-1}{m(m-1)}.
   $$
   Since $h_{m+1}-h_m=1/(m+1)$, the last sum simplifies considerably to
   \begin{eqnarray*}
      \sum_{m=2}^n\frac{h_m-1}{m(m-1)}
      & = & \sum_{m=2}^n\bigg(\frac{h_m}{m-1} - \frac{h_m}{m} - \frac{1}{m(m-1)}\bigg)
      \ = \ \sum_{m=1}^{n-1} \frac{h_{m+1}}{m} - \sum_{m=2}^n \frac{h_m}{m} - \bigg(1 - \frac{1}{n}\bigg)\\
      & = & h_2 + \sum_{m=2}^{n-1}\frac{1}{m(m+1)}
            - \frac{h_n}{n} - 1 + \frac{1}{n}
      \ = \ 1 - \frac{h_n}{n},
   \end{eqnarray*}
   Thus, the third moment of $\tau_{n,1}$ is
   $$
   \me(\tau_{n,1}^3)\ =\ \mu_n(3)\ =\ \frac{48}{n(n-1)}\bigg(1-\frac{h_n}{n}\bigg)
   \ =\ \frac{48}{n^2} - 48\frac{\log n}{n^3} + O\bigg(\frac{1}{n^3}\bigg).
   $$
   For the fourth moment we obtain
   $$
   \me(\tau_{n,1}^4)
   \ =\ \mu_n(4)
   \ =\ \frac{8}{n(n-1)}\sum_{m=2}^n \mu_m(3)
   \ =\ \frac{384}{n(n-1)}\sum_{m=2}^n \frac{1-h_m/m}{m(m-1)},
   $$
   a formula which does not seem to simplify much further. One may also
   introduce the generating functions $g_k(t):=\sum_{n=2}^\infty\mu_n(k)t^n$,
   $k\in\nz$, $|t|<1$. For all $k\ge 2$ we have
   \begin{eqnarray*}
      t^2g_k''(t)
      & = & \sum_{n=2}^\infty n(n-1)\mu_n(k)t^n
      \ = \ \sum_{n=2}^\infty 2k\sum_{m=2}^n \mu_m(k-1) t^n\\
      & = & 2k\sum_{m=2}^\infty \mu_m(k-1)t^m\sum_{n=m}^\infty t^{n-m}
      \ = \ \frac{2k}{1-t}g_{k-1}(t),
   \end{eqnarray*}
   so these generating functions satisfy the recursion
   $$
   g_k(t)\ =\ 2k\int_0^t\int_0^s \frac{g_{k-1}(u)}{u^2(1-u)}
   \,{\rm d}u\,{\rm d}s,\qquad k\ge 2, 0\le t<1,
   $$
   with initial function $g_1(t)=\sum_{n=2}^\infty
   (2/n)t^n=-2t-2\log(1-t)$. Using this recursion, $g_k(t)$ can be
   computed iteratively, however, the expressions become quite involved with
   increasing $k$. For example, $g_2(t)=8t-4(1-t)\log^2(1-t)-8(1-t){\rm Li}_2(t)$,
   $|t|<1$, where ${\rm Li}_2(t):=-\int_0^t (\log(1-x))/x\,{\rm d}x=
   \sum_{k=1}^\infty t^k/k^2$ denotes the dilogarithm function.
   In principle
   higher order moments and as well joint moments can be calculated
   analogously, however the expressions become more and more nasty with
   increasing order. In the following we exemplary derive an exact formula
   for $\mu_n(1,1)=\me(\tau_{n,1}\tau_{n,2})$. The recursion
   (\ref{generalrec}) for $j=2$ and $k_1=k_2=1$ reduces to (see (\ref{tau12rec}))
   $$
   \mu_n(1,1)
   \ =\ \frac{2}{g_n}\mu_n(1) + \frac{(n-2)_2}{(n)_2}\,\mu_{n-1}(1,1)
   \ = \frac{8}{n^2(n-1)} + \frac{(n-2)(n-3)}{n(n-1)}\,\mu_{n-1}(1,1),
   \qquad n\ge 2.
   $$
   It is readily checked by induction on $n$ that this recursion is solved
   by $\mu_2(1,1)=2$ and
   $$
   \mu_n(1,1)\ =\ \frac{4(n^2-5n+4h_n)}{n(n-1)^2(n-2)},\qquad n\in\{3,4,\ldots\}.
   $$
   In particular, $\mu_n(1,1)=4/n^2 - 4/n^3+ O((\log n)/n^4)$, $n\to\infty$.
   Moreover, ${\rm Cov}(\tau_{n,1},\tau_{n,2})=
   \mu_n(1,1)-(\mu_n(1))^2=4(n^2-5n+4h_n)/(n(n-1)^2(n-2))-4/n^2<0$ for all
   $n\ge 4$. Thus, for the Kingman coalescent, the lengths
   of two randomly chosen external branches are (slightly) negatively
   correlated for all $n\ge 4$.
   We have used the derived formulas to compute the following table.
   \begin{center}
      \begin{tabular}{|r|l|l|l|}
         \hline
         $n$ & $\mu_n(1)=\me(\tau_{n,1})$ & $\mu_n(1,1)=\me(\tau_{n,1}\tau_{n,2})$ & ${\rm Cov}(\tau_{n,1},\tau_{n,2})$\\
         \hline
         $2$ & $1$ & $2$ & $1$\\
         $3$ & $0.666667$ & $0.444444$ & $0$\\
         $4$ & $0.5$ & $0.240741$ & $-0.009259$\\
         $5$ & $0.4$ & $0.152222$ & $-0.007778$\\
         $10$ & $0.2$ & $0.038096$ & $-0.001904$\\
         $100$ & $0.02$ & $0.000396$ & $-0.000004$\\
         \hline
         $n\to\infty$ & $\frac{2}{n}$ & $\frac{4}{n^2} - \frac{4}{n^3}+O(\frac{\log n}{n^4})$ & $-\frac{4}{n^3}+O(\frac{\log n}{n^4})$\\
         \hline
      \end{tabular}\\

      \vspace{1mm}

      Table 1: Covariance of $\tau_{n,1}$ and $\tau_{n,2}$ for the
      Kingman coalescent
   \end{center}
\end{example}
In the following we focus on the Bolthausen--Sznitman coalescent
\cite{bolthausensznitman}, where $\Lambda$ is the uniform
distribution on $[0,1]$. Our second main result (Theorem
\ref{theorem12}) provides the asymptotics of all the joint moments
of the external branch lengths for the Bolthausen--Sznitman
coalescent.
\begin{theorem}[Asymptotics of the joint moments of the external
branch lengths] \label{theorem12}
   \ \\
   For the Bolthausen--Sznitman coalescent, the joint moments
   $\mu_n(k):=\me(\tau_{n,1}^{k_1}\cdots\tau_{n,j}^{k_j})$,
   $j\in\nz$, $k=(k_1,\ldots,k_j)\in\nz_0^j$,
   of the lengths $\tau_{n,1},\ldots,\tau_{n,n}$ of the external
   branches satisfy
   \begin{equation} \label{bsasy}
      \mu_n(k)
      \ \sim\ \frac{k_1!\cdots k_j!}{\log^{k_1+\cdots+k_j}n},
      \qquad n\to\infty.
   \end{equation}
\end{theorem}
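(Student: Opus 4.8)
The plan is to establish (\ref{bsasy}) by induction on the degree $d:=k_1+\cdots+k_j$, feeding the recursion (\ref{generalrec}) of Theorem \ref{theorem11} into a generating function and extracting coefficient asymptotics by singularity analysis. Since an exponent $k_i=0$ contributes the trivial factor $\tau_{n,i}^0=1$, exchangeability of the external branches lets me assume $k_1,\ldots,k_j\ge 1$, so that $j$ is the number of positive exponents; note that the power $\log^{k_1+\cdots+k_j}n=\log^d n$ in (\ref{bsasy}) depends on $d$ only. The case $d=0$ is trivial ($\mu_n(k)=1$). For the Bolthausen--Sznitman coalescent the Beta integral in (\ref{rates}) and the telescoping sum in (\ref{totalrates}) give $g_{nm}=n/((n-m)(n-m+1))$ and $g_n=n-1$, hence $p_{nm}=n/((n-1)(n-m)(n-m+1))$. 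Substituting this into (\ref{generalrec}), multiplying by $(n-1)_j$ and setting $u_m:=(m-1)_j\mu_m(k)$ I would reduce the recursion to
\[
   u_n\ =\ S_n + \frac{n-j}{n-1}\sum_{m=j+1}^{n-1}\frac{u_m}{(n-m)(n-m+1)},
   \qquad S_n:=(n-2)_{j-1}\sum_{i=1}^j k_i\,\mu_n(k-e_i).
\]
The source $S_n$ only involves moments of degree $d-1$; by the induction hypothesis each of the $j$ summands $k_i\mu_n(k-e_i)$ is asymptotic to $(k_1!\cdots k_j!)/\log^{d-1}n$ (using $k_i(k_i-1)!=k_i!$, and reading a summand with $k_i=1$ as a moment in $j-1$ variables), so that $S_n\sim j\,(k_1!\cdots k_j!)\,n^{j-1}/\log^{d-1}n$.

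The core of the argument is the analysis of this linear recursion through the generating functions $U(t):=\sum_n u_n t^n$ and $K(t):=\sum_{\ell\ge1}t^\ell/(\ell(\ell+1))$. A short computation yields the identity $1-K(t)=-(1-t)\log(1-t)/t$, which is the source of the logarithmic corrections: it is exactly this logarithm in the ``denominator'' of the recursion that turns the polynomial decay of $u_n/n^j$ into the powers of $\log n$ in (\ref{bsasy}). Because the convolution is weighted by the $n$-dependent factor $(n-j)/(n-1)$, the recursion does not become an algebraic identity for $U$ but, after multiplying by $n-1$ and using $\sum_n nu_nt^n=tU'(t)$, a first-order linear differential equation
\[
   t\bigl(1-K(t)\bigr)U'(t)\ =\ \bigl(1+tK'(t)-jK(t)\bigr)U(t)+\widetilde S(t),
\]
whose inhomogeneity $\widetilde S$ is the generating function of $(n-1)S_n$ and is controlled by the induction hypothesis. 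For $d=1$, $j=1$ the recursion can even be solved explicitly, $U(t)=-t^3/((1-t)^2\log(1-t))$, already giving $\mu_n(1)\sim 1/\log n$ and illustrating the mechanism.

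Solving the ODE by an integrating factor and studying the solution as $t\to 1$ should give $U(t)\sim j!\,(k_1!\cdots k_j!)/((1-t)^{j+1}(-\log(1-t))^d)$. The Flajolet--Odlyzko transfer theorem then yields $u_n\sim (k_1!\cdots k_j!)\,n^{j}/\log^d n$, and since $\mu_n(k)=u_n/(n-1)_j$ with $(n-1)_j\sim n^j$, this is precisely (\ref{bsasy}). A small bookkeeping point worth highlighting is that the factor $j$ produced in the source $S_n$ (all $j$ external branches contribute equally) is exactly what promotes the $\Gamma(j)$ appearing for $S$ to the $\Gamma(j+1)=j!$ appearing for $U$, so that after the final division by $(n-1)_j$ the constant comes out cleanly as $k_1!\cdots k_j!$ rather than carrying a spurious factor of $j$.

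The main obstacle is the passage through the singularity at $t=1$. The $n$-dependent weight $(n-j)/(n-1)$ is what forces the differential (rather than purely algebraic) relation, so one must actually solve the ODE and, more importantly, verify that $U$ continues analytically to a domain of the shape required by the transfer theorem and that the claimed term $j!\,(k_1!\cdots k_j!)/((1-t)^{j+1}(-\log(1-t))^d)$ genuinely dominates the remaining contributions; tracking the logarithmic factors $(-\log(1-t))^d$ accurately is the delicate part. To keep this clean I would carry the induction hypothesis in the form of a singular expansion for the generating function of the lower-degree moments, rather than only as coefficient asymptotics, so that $\widetilde S$ is fed into the ODE at the level of functions (a polynomial-in-$n$ weight such as $(n-2)_{j-1}$ acting as a differential operator that preserves the $\Delta$-domain) and no Tauberian step is needed until the single, final application of the transfer theorem.
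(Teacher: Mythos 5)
Your strategy coincides with the paper's in its core: induction on the degree $d$, the specialization $g_{nm}=n/((n-m)(n-m+1))$, $g_n=n-1$, and the passage to a generating function whose recursion becomes a first-order linear ODE via the kernel identity $1-K(t)=-(1-t)\log(1-t)/t$ (the paper's $z(1-a(z))=(1-z)L(z)$). Your $U(t)=\sum_n (n-1)_j\mu_n(k)t^n$ is just $t^{j+1}f_k^{(j)}(t)$ in the paper's notation, your ODE is the paper's Eq.~(\ref{ode}) differentiated once, and your claimed singular behaviour $U(t)\sim j!\,k_1!\cdots k_j!\,(1-t)^{-(j+1)}(-\log(1-t))^{-d}$, your source asymptotics $S_n\sim j\,k_1!\cdots k_j!\,n^{j-1}/\log^{d-1}n$, and your explicit $d=1$ solution all check out against Lemma 3.1 and its solution formula (\ref{fksolution}). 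The genuine divergence is the coefficient-extraction step. You propose Flajolet--Odlyzko singularity analysis, which obliges you to continue $U$ (and, through the induction, the inhomogeneity $\widetilde S$) analytically to a $\Delta$-domain with uniform singular expansions there --- precisely the point you flag as the main obstacle and leave open. The paper sidesteps this entirely: because the moment sequences $\mu_n(k)$ (hence also the sources $b_n=\sum_i k_i\mu_n(k-e_i)$) are non-increasing in $n$ by the natural coupling of $n$-coalescents, Karamata's Tauberian theorem for power series \cite[Corollary~1.7.3]{binghamgoldieteugels} converts real-axis asymptotics as $z\nearrow 1$ (obtained by repeated de l'Hospital applied to the closed-form solution (\ref{fksolution})) directly into coefficient asymptotics, in both directions of the induction, with no complex-analytic continuation needed. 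So: what your route buys is independence from any monotonicity hypothesis and, in principle, full asymptotic expansions rather than leading terms; what the paper's route buys is that your one unresolved step disappears for free. If you want to keep singularity analysis you must actually establish $\Delta$-analyticity of the iterated integrals $(L(z))^{1-j}\int_0^z (L(t))^{j-2}(1-t)^{-1}g_k(t)\,{\rm d}t$ with controlled error terms; otherwise, simply invoking the monotonicity of the coefficients and the Tauberian theorem closes your argument as written.
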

\begin{remark}
   For $j=2$ and $k_1=k_2=1$ Eq. (\ref{bsasy}) implies that
   $\me(\tau_{n,1}\tau_{n,2})
      =\mu_n(1,1)
      \sim 1/\log^2n\sim (\mu_n(1))^2$ as $n\to\infty$,
   which does not provide much information on the covariance
   ${\rm Cov}(\tau_{n,1},\tau_{n,2})=
   \mu_n(1,1) - (\mu_n(1))^2$.
   With some more effort
   (see Corollary \ref{corollary32} and the remark thereafter)
   exact solutions for $\me(\tau_{n,1})$ and $\me(\tau_{n,1}\tau_{n,2})$
   are obtained and it follows that $\tau_{n,1}$ and $\tau_{n,2}$
   are positively correlated for all $n\ge 2$, in contrast
   to the situation for the
   Kingman coalescent, where $\tau_{n,1}$ and $\tau_{n,2}$ are
   slightly negatively correlated for all $n\ge 4$.
\end{remark}
The following two corollaries are a direct
consequence of Theorem \ref{theorem12}.
\begin{corollary}[Weak limiting behavior of the external branch lengths]
\label{corollary13}
   \ \\
   For the Bolthausen--Sznitman coalescent,
   $(\log n)(\tau_{n,1},\ldots,\tau_{n,n},0,0,\ldots)\to
   (\tau_1,\tau_2,\ldots)$ in distribution as $n\to\infty$, where
   $\tau_1,\tau_2,\ldots$ are independent and all exponentially
   distributed with parameter $1$.
\end{corollary}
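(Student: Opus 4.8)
The plan is to prove the corollary by the method of moments, feeding in the joint-moment asymptotics supplied by Theorem \ref{theorem12}. Fix $j\in\nz$ and consider, for $n\ge j$, the scaled random vector $X_n:=(\log n)(\tau_{n,1},\ldots,\tau_{n,j})$. For any multi-index $k=(k_1,\ldots,k_j)\in\nz_0^j$ its joint moment is
$$
\me\Big(\prod_{i=1}^j\big((\log n)\tau_{n,i}\big)^{k_i}\Big)
=(\log n)^{k_1+\cdots+k_j}\,\mu_n(k),
$$
which by (\ref{bsasy}) converges to $k_1!\cdots k_j!$ as $n\to\infty$. On the other hand, if $\tau_1,\ldots,\tau_j$ are independent and $\mathrm{Exp}(1)$-distributed, then by independence $\me(\prod_{i=1}^j\tau_i^{k_i})=\prod_{i=1}^j\me(\tau_i^{k_i})=\prod_{i=1}^j k_i!=k_1!\cdots k_j!$. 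Hence all joint moments of $X_n$ converge to the corresponding joint moments of $(\tau_1,\ldots,\tau_j)$.

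The next step is to upgrade this moment convergence to convergence in distribution. I would invoke the multivariate version of the Fr\'echet--Shohat theorem: if all joint moments of $X_n$ converge to the joint moments of a limiting law that is uniquely determined by its moments, then $X_n$ converges weakly to that law. The required moment-determinacy is immediate here, since the product-exponential distribution has moment generating function $t\mapsto\prod_{i=1}^j(1-t_i)^{-1}$, which is finite on a neighborhood of the origin; this standard sufficient condition guarantees determinacy and, together with the uniform boundedness of, say, the second moments, also yields the tightness needed so that every subsequential limit has the prescribed moments. Consequently $X_n\to(\tau_1,\ldots,\tau_j)$ in distribution for each fixed $j$.

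Finally I would lift these finite-dimensional statements to the claimed convergence of the full sequence $(\log n)(\tau_{n,1},\ldots,\tau_{n,n},0,0,\ldots)$ in $\rz^\nz$ equipped with the product topology. Since weak convergence in $\rz^\nz$ is equivalent to convergence of all finite-dimensional marginals, and since for $n>j$ the first $j$ coordinates of this sequence are exactly $(\log n)(\tau_{n,1},\ldots,\tau_{n,j})$ (the appended zeros being irrelevant once $n>j$), the previous step yields convergence of every finite-dimensional marginal to the law of $(\tau_1,\ldots,\tau_j)$. The independence of the limit coordinates and their $\mathrm{Exp}(1)$ marginals are already encoded in these finite-dimensional limits, so the corollary follows.

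The main obstacle I anticipate is the rigorous justification of the multivariate method of moments: one must verify that the product-exponential limit is moment-determinate and that joint-moment convergence therefore forces distributional convergence (including the tightness of $(X_n)$). Once this is in place, the reduction to finite-dimensional distributions and the bookkeeping with the trailing zeros are routine.
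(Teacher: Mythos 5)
Your proposal is correct and follows essentially the same route as the paper: compute the joint moments of $(\log n)(\tau_{n,1},\ldots,\tau_{n,j})$ from Theorem \ref{theorem12}, identify them as the joint moments of independent $\mathrm{Exp}(1)$ variables, invoke the (multivariate) method of moments with moment-determinacy guaranteed by the finiteness of the exponential moment series (the paper verifies $\sum_r\theta^r\me(\tau_i^r)/r!<\infty$ for $\theta<1$, which is your MGF condition), and then pass from finite-dimensional convergence to convergence in the product space.
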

The following result concerns the asymptotics of the total
external branch length $L_n^{external}:=\sum_{i=1}^n\tau_{n,i}$ of
the Bolthausen--Sznitman coalescent.
\begin{corollary}[Asymptotics of the total external branch length]
\label{corollary14}
   \ \\
   Fix $k\in\nz$. For the Bolthausen--Sznitman coalescent, the $k$th moment
   of $L_n^{external}$ satisfies
   \begin{equation} \label{lnexternalkasy}
      \me((L_n^{external})^k)
      \ \sim\ \frac{n^k}{\log^kn},
      \qquad n\to\infty.
   \end{equation}
In particular, $\frac{\log n}{n}L_n^{external}\to 1$ in
probability as $n\to\infty$.
\end{corollary}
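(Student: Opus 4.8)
The plan is to expand the $k$th power $(L_n^{external})^k=(\sum_{i=1}^n\tau_{n,i})^k$, to use the exchangeability of the external branches $\tau_{n,1},\ldots,\tau_{n,n}$, and then to substitute the asymptotics from Theorem \ref{theorem12}. First I would write
\begin{equation*}
   \me\big((L_n^{external})^k\big)
   \ =\ \sum_{i_1=1}^n\cdots\sum_{i_k=1}^n\me(\tau_{n,i_1}\cdots\tau_{n,i_k})
\end{equation*}
and sort the $k$-tuples $(i_1,\ldots,i_k)$ according to the set partition $\pi$ of $\{1,\ldots,k\}$ they induce, where two positions lie in the same block of $\pi$ exactly when the corresponding indices agree. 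A tuple realising a fixed $\pi$ with $|\pi|$ blocks is obtained by assigning $|\pi|$ distinct index values to the blocks, so there are $(n)_{|\pi|}=n(n-1)\cdots(n-|\pi|+1)$ such tuples; and by exchangeability the expectation of the associated product equals the joint moment $\mu_n(k_1^\pi,\ldots,k_{|\pi|}^\pi)$, where $k_1^\pi,\ldots,k_{|\pi|}^\pi$ are the block sizes of $\pi$. This yields the finite identity
\begin{equation*}
   \me\big((L_n^{external})^k\big)
   \ =\ \sum_{\pi}(n)_{|\pi|}\,\mu_n(k_1^\pi,\ldots,k_{|\pi|}^\pi),
\end{equation*}
the sum being over all set partitions $\pi$ of $\{1,\ldots,k\}$; note that the number of summands is finite (it is the $k$th Bell number) and in particular does not depend on $n$.

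Next I would single out the dominant term. Every partition satisfies $k_1^\pi+\cdots+k_{|\pi|}^\pi=k$, so by Theorem \ref{theorem12} each joint moment occurring above decays like $1/\log^kn$; precisely, $\mu_n(k_1^\pi,\ldots,k_{|\pi|}^\pi)\sim(k_1^\pi!\cdots k_{|\pi|}^\pi!)/\log^kn$. The combinatorial prefactor $(n)_{|\pi|}$ is of order $n^{|\pi|}$, and $|\pi|$ is maximal, equal to $k$, exactly for the partition into $k$ singletons. For that partition all block sizes are $1$, giving the contribution $(n)_k\,\mu_n(1,\ldots,1)\sim n^k/\log^kn$, whereas every partition with $|\pi|=j<k$ contributes only $O(n^j/\log^kn)=o(n^k/\log^kn)$. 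Since finitely many partitions occur, these asymptotic equivalences may be added and the fully off-diagonal term dominates. This proves $\me((L_n^{external})^k)\sim n^k/\log^kn$, i.e.\ (\ref{lnexternalkasy}).

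For the concluding convergence in probability I would use a second-moment argument. Set $W_n:=(\log n/n)L_n^{external}$. The case $k=1$ of (\ref{lnexternalkasy}) gives $\me(W_n)=(\log n/n)\me(L_n^{external})\to 1$, and the case $k=2$ gives $\me(W_n^2)=(\log^2n/n^2)\me((L_n^{external})^2)\to 1$. Hence $\me((W_n-1)^2)=\me(W_n^2)-2\me(W_n)+1\to 0$, so $W_n\to 1$ in $L^2$ and therefore in probability, as asserted.

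The whole argument is short once Theorem \ref{theorem12} is in hand; the only step needing attention is the combinatorial bookkeeping of the expansion, namely checking that every contribution with fewer than $k$ distinct external branches is of strictly smaller order. This holds because, for fixed $k$, the number of partition types stays bounded while each additional distinct index gains a factor of order $n$ without altering the power of $\log n$, so the term indexed by the all-singletons partition wins.
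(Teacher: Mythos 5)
Your argument is correct and essentially the same as the paper's: the paper likewise expands $\me((L_n^{external})^k)$ into a finite sum of joint moments weighted by combinatorial factors (citing the identity from M\"ohle (2010) rather than rederiving it via set partitions as you do), observes via Theorem \ref{theorem12} that the term with $k$ distinct indices dominates at order $n^k/\log^k n$, and concludes. The only cosmetic difference is at the end, where the paper deduces $\frac{\log n}{n}L_n^{external}\to 1$ from the convergence of all moments to those of the constant $1$, while you use the equally valid (and slightly more economical) second-moment/Chebyshev argument requiring only $k=1,2$.
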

The moments of $L_n^{external}$ do not provide much information on
the distributional limiting behavior of $L_n^{external}$ as
$n\to\infty$.
   Let $L_n$ denote the total branch length (the sum of the lengths of
   all branches) of the Bolthausen--Sznitman
   $n$-coalescent.
   Kersting et al. \cite[Theorem 1.1]{kerstingpardosirijegousse} recently
   showed that the internal branch length $L_n^{internal}:=L_n-L_n^{external}$
   satisfies
   $$
   \frac{\log^2n}{n}L_n^{internal}
   \ \to\ 1
   $$
   in probability. Combining this result with
   \cite[Theorem 5.2]{drmotaiksanovmoehleroesler1} it follows that
   (see \cite[Corollary 1.2]{kerstingpardosirijegousse})
   \begin{equation} \label{convergence}
      \frac{\log^2n}{n}L_n^{external} - \log n-\log\log n\ \to\ L-1
   \end{equation}
   in distribution as $n\to\infty$, where $L$ is a $1$-stable random
   variable with characteristic function
   $t\mapsto\exp(it\log|t|-\pi|t|/2)$, $t\in\rz$.
\begin{remark}
   The same scaling and, except for the additional shift $-1$ on
   the right hand side in (\ref{convergence}), the same limiting law
   as in (\ref{convergence}) is known for the number of cuts needed
   to isolate the root of a random recursive tree
   (\cite{drmotaiksanovmoehleroesler2}, \cite{iksanovmoehle}). Essentially
   the same scaling and convergence result has been obtained for random
   records and cuttings in binary search trees by Holmgren
   \cite[Theorem 1.1]{holmgren2} and more generally in split trees
   (Holmgren \cite[Theorem 1.1]{holmgren1} and \cite[Theorem 1.1]{holmgren3})
   introduced by Devroye \cite{devroye}. The logarithmic height of the
   involved trees seems to be one of the main sources for the occurrence
   of such scalings and of $1$-stable limiting laws.
   To the best of the authors knowledge the distributional limiting behavior of
   $L_n^{internal}$, properly centered and scaled,
   is so far unknown for the Bolthausen--Sznitman coalescent.
\end{remark}
\subsection{Proof of Theorem \ref{theorem11}} \label{proofs}
\setcounter{theorem}{0}
   Let $T=T_n$ denote the time of the first jump of the block counting process
   $N^{(n)}$ and let $I=I_n$ denote the state of 
   $N^{(n)}$ at its first jump. Note that $T$ and $I$ are independent, $T$ is
   exponentially
   distributed with parameter $g_n$ and $p_{nm}:=\pr(I=m)=g_{nm}/g_n$,
   $m\in\{1,\ldots,n-1\}$. 
   For $i\in\{1,\ldots,n\}$ and $h>0$ define
   $\tau_i':=\tau_{n,i}-h\wedge T$. By the Markov property,
   for $h\to 0$,
   \begin{eqnarray*}
      &   & \hspace{-15mm}
            \me(\tau_{n,1}^{k_1}\cdots\tau_{n,j}^{k_j}1_{\{T>h\}})
      \ = \ \me((\tau_1'+h)^{k_1}\cdots(\tau_j'+h)^{k_j}1_{\{T>h\}})\\
      & = & \me(\tau_{n,1}^{k_1}\cdots\tau_{n,j}^{k_j})\pr(T>h) +
            h\sum_{i=1}^j k_i
            \me(
               \tau_{n,1}^{k_1}\cdots\tau_{n,i-1}^{k_{i-1}}
               \tau_{n,i}^{k_i-1}
               \tau_{n,i+1}^{k_{i+1}}\cdots\tau_{n,j}^{k_j}
            ) + o(h).
   \end{eqnarray*}
   Also for $h\to 0$,
   $$
   \me(\tau_{n,1}^{k_1}\cdots\tau_{n,j}^{k_j}1_{\{T\le h\}})
   \ = \ \me((\tau_1'+T)^{k_1}\cdots(\tau_j'+T)^{k_j}1_{\{T\le h\}})
   \ = \ \me((\tau_1')^{k_1}\cdots(\tau_j')^{k_j}1_{\{T\le h\}}) + o(h).
   $$
   Now at time $T$ either the event
   $
   A:=\{\mbox{one of the individuals $1$ to $j$ is involved in the first collision}\}
   $
   occurs, in which case $\tau_i'=0$ for some $i\in\{1,\ldots,j\}$,
   and the above expectation vanishes since $k_1,\ldots,k_j>0$, or none of these
   $j$ individuals is involved in the first collision. Then, by the
   strong Markov property,
   $$
   \me((\tau_1')^{k_1}\cdots(\tau_j')^{k_j}1_{\{T\le h,I=m,A^c\}})
   \ =\ \me(\tau_{m,1}^{k_1}\cdots\tau_{m,j}^{k_j})\pr(T\le h,I=m,A^c),
   $$
   where $A^c$ denotes the complement of $A$. Adding both expectations yields
   \begin{eqnarray*}
      \me(\tau_{n,1}^{k_1}\cdots\tau_{n,j}^{k_j})
      & = & \me(\tau_{n,1}^{k_1}\cdots\tau_{n,j}^{k_j})\pr(T>h)
            +h\sum_{i=1}^j k_i\me(\tau_{n,1}^{k_1}\cdots\tau_{n,i-1}^{k_{i-1}}\tau_{n,i}^{k_i-1}\tau_{n,i+1}^{k_{i+1}}\cdots\tau_{n,j}^{k_j})\\
      &   & + \sum_{m=j+1}^{n-1}\me(\tau_{m,1}^{k_1}\cdots\tau_{m,j}^{k_j})
            \pr(T\le h)\pr(I=m)\frac{(m-1)_j}{(n)_j} + o(h).
   \end{eqnarray*}
   Collecting both terms involving $\me(\tau_{n,1}^{k_1}\cdots\tau_{n,j}^{k_j})$
   on the left hand side and letting $h\to 0$ gives the claim, since
   $\pr(T\le h)=1-e^{-g_nh}\sim g_nh$ as $h\to 0$.\hfill$\Box$
\subsection{Differential equations approach}
\setcounter{theorem}{0}
   A differential equations approach is provided, which is used in the
   proof of Theorem \ref{theorem12} given in the following Section
   \ref{furtherproofs}. This approach furthermore yields for example an
   exact expression for $\me(\tau_{n,1}\tau_{n,2})$ in terms of Stirling
   numbers (see Corollary \ref{corollary32}).
   Let $D:=\{z\in\cz\,:\,|z|<1\}$ denote the open unit disc in the complex plane.
   For $j\in\nz$ and $k=(k_1,\ldots,k_j)\in\nz_0^j$ define the generating function
   $$
   f_k(z)\ :=\ \sum_{n=j}^\infty \me(\tau_{n,1}^{k_1}\cdots\tau_{n,j}^{k_j})z^{n-1}
   \ =\ \sum_{n=j}^\infty a_n z^{n-1},\qquad z\in D,
   $$
   where, for $n\ge j$, we use the abbreviation
   $a_n:=\mu_n(k):=\me(\tau_{n,1}^{k_1}\cdots\tau_{n,j}^{k_j})$ for convenience.
   Note that, due to the natural coupling property of $n$-coalescents, the
   sequence $(a_n)_{n\ge j}$ is non-increasing. Thus, $f_k$ and all its
   derivatives $f_k', f_k'',\ldots$ are analytic functions on $D$.
   In order to state the following result it is convenient to introduce
   $L(z):=-\log(1-z)$, $z\in D$, and to define the functions $g_k:D\to\cz$,
   $k=(k_1,\ldots,k_j)\in\nz^j$, via $g_1(z):=z/(1-z)$ and
   \begin{equation} \label{gk}
      g_k(z)\ :=\ \sum_{i=1}^j k_i f_{k-e_i}^{(j-1)}(z)
   \end{equation}
   for all $z\in D$ and all $k=(k_1,\ldots,k_j)\in\nz^j$ satisfying $k_1+\cdots+k_j>1$,
   where $e_i$, $i\in\{1,\ldots,j\}$, denotes the $i$th unit vector in $\rz^j$.
\begin{lemma}
   For the Bolthausen--Sznitman coalescent,
   the function $f_k$, $k=(k_1,\ldots,k_j)\in\nz^j$, satisfies the
   differential equation
   \begin{equation} \label{mydgl}
      \frac{{\rm d}}{{\rm d}z}\big((L(z))^{j-1}f_k^{(j-1)}(z)\big)
      \ =\ \frac{(L(z))^{j-2}}{1-z}g_k(z),
      \quad z\in D\setminus\{0\},
   \end{equation}
   with solution
   \begin{equation} \label{fksolution}
      f_k^{(j-1)}(z)\ =\
      \frac{1}{(L(z))^{j-1}}\int_0^z \frac{(L(t))^{j-2}}{1-t}g_k(t)\,{\rm d}t,
      \quad z\in D\setminus\{0\}.
   \end{equation}
   In particular,
   \begin{equation} \label{particular}
      f_1(z)\ =\ \int_0^z \frac{t}{(1-t)^2L(t)}\,{\rm d}t
      \quad\mbox{and}\quad
      f_{(1,1)}'(z)\ =\ \frac{2}{L(z)}\int_0^z\frac{t}{(1-t)^3L(t)}\,{\rm d}t,
      \quad z\in D\setminus\{0\}.
   \end{equation}
\end{lemma}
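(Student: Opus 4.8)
The plan is to specialize the general recursion (\ref{generalrec}) to the Bolthausen--Sznitman coalescent and then convert it into (\ref{mydgl}) by passing to generating functions. First I would record the rates. Since $\Lambda$ is Lebesgue measure on $[0,1]$, a short Beta-integral computation in (\ref{rates}) gives $g_{nm}=n/((n-m+1)(n-m))$, and the telescoping sum in (\ref{totalrates}) gives $g_n=n-1$, so that $p_{nm}=g_{nm}/(n-1)$. Substituting these into (\ref{generalrec}), multiplying through by $g_n=n-1$, and using the identities $n/(n)_j=1/(n-1)_{j-1}$ and $(m-1)_j=(m-j)(m-1)_{j-1}$, I would rewrite the recursion in terms of $c_n:=(n-1)_{j-1}\mu_n(k)$, which are exactly the coefficients in $f_k^{(j-1)}(z)=\sum_{n\ge j}c_nz^{n-j}$, as
$$(n-1)c_n=(n-1)_{j-1}b_n+\sum_{m=j+1}^{n-1}\frac{m-j}{(n-m+1)(n-m)}\,c_m,\qquad n\ge j,$$
where $b_n:=\sum_{i=1}^j k_i\mu_n(k-e_i)$.

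Next I would multiply this identity by $z^{n-j}$ and sum over $n\ge j$, working with the analytic function $F:=f_k^{(j-1)}$ on $D$ (analyticity was already recorded above). Splitting $n-1=(n-j)+(j-1)$ turns the left-hand side into $zF'(z)+(j-1)F(z)$, while the first term on the right becomes precisely $g_k(z)=\sum_{i=1}^j k_i f_{k-e_i}^{(j-1)}(z)$ from (\ref{gk}). The double sum is a Cauchy product: the substitution $l=n-m$ factors it as $zF'(z)\,K(z)$ with kernel $K(z):=\sum_{l\ge 1}z^l/(l(l+1))$. The key computation is the closed form of this kernel; partial fractions $1/(l(l+1))=1/l-1/(l+1)$ together with the expansion of $L(z)=-\log(1-z)$ give $K(z)=1-(1-z)L(z)/z$. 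The decisive step is then the exact cancellation of the two $zF'(z)$ terms, which collapses the relation to the first-order equation
$$(1-z)L(z)f_k^{(j)}(z)+(j-1)f_k^{(j-1)}(z)=g_k(z),\qquad z\in D,$$
and this is exactly (\ref{mydgl}) on $D\setminus\{0\}$ after multiplying by $L(z)^{j-2}/(1-z)$ and recognizing the left-hand side of (\ref{mydgl}) as a derivative via the product rule.

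To obtain (\ref{fksolution}) I would integrate (\ref{mydgl}) from $0$ to $z$. The only point needing care is that the boundary term $L(t)^{j-1}f_k^{(j-1)}(t)$ vanishes as $t\to 0$: for $j\ge 2$ because $L(t)\to 0$ while $f_k^{(j-1)}$ stays bounded near $0$, and for $j=1$ because $f_k(0)=\mu_1(k)=0$ (recall $\tau_{1,1}=0$). Finally, the two formulas in (\ref{particular}) follow by specialization. For $j=1$, $k=(1)$ one inserts $g_1(z)=z/(1-z)$ into (\ref{fksolution}); for $j=2$, $k=(1,1)$ exchangeability gives $f_{(0,1)}=f_{(1,0)}=f_1$, so that $g_{(1,1)}(z)=2f_1'(z)=2z/((1-z)^2L(z))$, and (\ref{fksolution}) yields the stated expression for $f_{(1,1)}'$.

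The main obstacle I anticipate lies in the generating-function step: identifying the convolution kernel $K$ in closed form and checking that the coefficient of $zF'(z)$ produced by $K$ is arranged so that these terms cancel exactly, leaving a genuinely first-order (hence directly integrable) equation rather than a second-order one. A secondary technical point is the base-case bookkeeping for $j=1$, $k=(1)$: because (\ref{generalrec}) is only valid for $n\ge 2$, extending (\ref{gk}) naively to this case would contribute the spurious constant term $\sum_{n\ge 1}z^{n-1}=1/(1-z)$ instead of $\sum_{n\ge 2}z^{n-1}=z/(1-z)$, which is precisely why $g_1$ is defined separately as $z/(1-z)$.
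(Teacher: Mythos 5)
Your proposal is correct and follows essentially the same route as the paper: specialize the rates to $g_{nm}=n/((n-m)(n-m+1))$, $g_n=n-1$, pass the recursion to generating functions, identify the convolution kernel $\sum_{l\ge1}z^l/(l(l+1))=1-(1-z)L(z)/z$ so that the $zf_k^{(j)}$ terms combine into $(1-z)L(z)f_k^{(j)}$, and integrate with the boundary condition at $0$. The only (harmless) difference is notational bookkeeping via $c_n=(n-1)_{j-1}\mu_n(k)$, and your parenthetical remark about why $g_1$ must be defined separately matches the paper's treatment of the base case.
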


\begin{proof}
   For the Bolthausen--Sznitman coalescent, $g_{nm}=n/((n-m)(n-m+1))$,
   $m\in\{1,\ldots,n-1\}$ and $g_n=n-1$, $n\in\nz$. Thus, $p_{nm}:=
   g_{nm}/g_n=n/((n-1)(n-m)(n-m+1))$, $m,n\in\nz$ with $m<n$.
   Fix $j\in\nz$ and $k=(k_1,\ldots,k_j)\in \nz^j$ and, for $n\in\nz$, define
   $a_n:=\mu_n(k)$ for convenience. For $n\ge\max(2,j)$ the recursion
   (\ref{generalrec}) reads
   $$
   a_n
   \ = \ q_n + \sum_{m=j+1}^{n-1}p_{nm}\frac{(m-1)_j}{(n)_j}\,a_m
   \ = \ q_n + \frac{n}{(n-1)(n)_j}\sum_{m=j+1}^{n-1}
         \frac{(m-1)_j}{(n-m)(n-m+1)}\,a_m,
   $$
   where $q_n:=g_n^{-1}\sum_{i=1}^j k_i\mu_n(k-e_i)$ for all $n\ge\max(2,j)$.
   Thus,
   \begin{equation} \label{localrec}
      (n-1)(n-1)_{j-1}a_n\ =\ (n-1)(n-1)_{j-1}q_n + \sum_{m=j+1}^{n-1}
      \frac{(m-1)_j}{(n-m)(n-m+1)}a_m.
   \end{equation}
   Before we come back to the recursion (\ref{localrec}) let us
   first verify that
   \begin{equation} \label{gk2}
      \sum_{n=\max(2,j)}^\infty (n-1)(n-1)_{j-1}q_nz^{n-j}\ =\ g_k(z),
      \qquad z\in D.
   \end{equation}
   Obviously (\ref{gk2}) holds for $j=1$ and $k_1=1$, since
   in this case $q_n=1/g_n=1/(n-1)$ and $g_1(z)=z/(1-z)$ by definition.
   For $k=(k_1,\ldots,k_j)\in\nz^j$ with $k_1+\cdots+k_j>1$ we have
   \begin{eqnarray*}
      &   & \hspace{-25mm}
            \sum_{n=\max(2,j)}^\infty (n-1)(n-1)_{j-1}q_nz^{n-j}
      \ = \ \sum_{n=\max(2,j)}^\infty (n-1)_{j-1}\sum_{i=1}^j k_i
            \mu_n(k-e_i) z^{n-j}\\
      & = & \Big(\frac{{\rm d}}{{\rm d}z}\Big)^{j-1}\sum_{i=1}^j k_i\sum_{n=\max(2,j)}^\infty
            \mu_n(k-e_i) z^{n-1}
      \ = \ \sum_{i=1}^j k_i f_{k-e_i}^{(j-1)}(z)
      \ = \ g_k(z).
   \end{eqnarray*}
   Thus, (\ref{gk2}) is established.
   In view of $(n-1)(n-1)_{j-1}=(n-1)_j+(j-1)(n-1)_{j-1}$
   and (\ref{gk2}), by multiplying both sides in
   (\ref{localrec}) with $z^{n-j}$
   and summing over all $n\ge \max(2,j)$, the
   recursion (\ref{localrec}) translates to
   \begin{eqnarray}
      zf_k^{(j)}(z) + (j-1)f_k^{(j-1)}(z)
      & = & g_k(z) + \sum_{n=\max(2,j)}^\infty \sum_{m=j+1}^{n-1}\frac{(m-1)_j}{(n-m)(n-m+1)} a_mz^{n-j}\nonumber\\
      & = & g_k(z) + \sum_{m=j+1}^\infty (m-1)_ja_mz^{m-j}\sum_{n=m+1}^\infty \frac{1}{(n-m)(n-m+1)}z^{n-m}\nonumber\\
      & = & g_k(z) + za(z)\Big(\frac{{\rm d}}{{\rm d}z}\Big)^j
            \sum_{m=j}^\infty a_m z^{m-1}\nonumber\\
      & = & g_k(z) + za(z)f_k^{(j)}(z), \label{ode}
   \end{eqnarray}
   where $a(z):=\sum_{n=1}^\infty z^n/(n(n+1))$ for $z\in D$.
   Since $z(1-a(z))=(1-z)L(z)$, the differential
   equation (\ref{ode}) can be rewritten in the form (\ref{mydgl}).
   For $j>1$ the only solution of (\ref{mydgl}) being continuous at $0$ (and for
   $j=1$ the only solution of (\ref{mydgl}) with $f_k(0)=0$) is given by (\ref{fksolution}).
   Since $g_1(z)=z/(1-z)$, (\ref{fksolution}) reduces
   for $j:=k_1:=1$ to the first
   equation in (\ref{particular}),
   in agreement with \cite[Lemma 3.1, Eq.~(3.3)]{freundmoehle}).
   Noting that $g_{(1,1)}(z)=f_{(0,1)}'(z)+f_{(1,0)}'(z)=2f_1'(z)=2z/((1-z)^2L(z))$,
   the formula for $f_{(1,1)}'(z)$ in (\ref{particular}) follows by choosing
   $j:=2$ and $k_1:=k_2:=1$ in
   (\ref{fksolution}).\hfill$\Box$
\end{proof}
\begin{corollary} {\bf (Exact formula for $\me(\tau_{n,1}\tau_{n,2})$)}
\label{corollary32}
   \ \\
   Fix $n\in\{2,3,\ldots\}$. For the Bolthausen--Sznitman coalescent,
   \begin{equation} \label{tau12exact}
      \me(\tau_{n,1}\tau_{n,2})
      \ =\ \frac{2}{(n-1)!}\sum_{k=1}^{n-1}\frac{2^k-1}{k^2}s(n-2,k-1),
   \end{equation}
   where the $s(n,k)$ denote the absolute Stirling numbers of the
   first kind.
\end{corollary}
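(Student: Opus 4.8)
The plan is to start from the closed form for $f_{(1,1)}'$ already supplied by the Lemma, namely the second identity in (\ref{particular}),
$$f_{(1,1)}'(z)\ =\ \frac{2}{L(z)}\int_0^z \frac{t}{(1-t)^3 L(t)}\,{\rm d}t,$$
and simply to read off its Taylor coefficients. Since $f_{(1,1)}(z)=\sum_{n\ge 2}\me(\tau_{n,1}\tau_{n,2})z^{n-1}$ by definition, termwise differentiation gives $[z^{n-2}]f_{(1,1)}'(z)=(n-1)\me(\tau_{n,1}\tau_{n,2})$, so it suffices to expand $f_{(1,1)}'$ as a power series in $z$ and isolate the coefficient of $z^{n-2}$.

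First I would linearise the inner integral by the substitution $u=L(t)=-\log(1-t)$, under which $1-t=e^{-u}$, ${\rm d}t=e^{-u}\,{\rm d}u$, and the integrand collapses to $(e^{2u}-e^{u})/u$. Hence the inner integral equals $\int_0^{L(z)}(e^{2u}-e^{u})/u\,{\rm d}u$. Expanding $e^{2u}-e^{u}=\sum_{k\ge 1}(2^k-1)u^k/k!$ and integrating term by term would yield
$$\int_0^z\frac{t}{(1-t)^3 L(t)}\,{\rm d}t\ =\ \sum_{k\ge 1}\frac{2^k-1}{k\,k!}\,(L(z))^k,$$
so that, after dividing by $L(z)$ and multiplying by $2$,
$$f_{(1,1)}'(z)\ =\ 2\sum_{k\ge 1}\frac{2^k-1}{k\,k!}\,(L(z))^{k-1}.$$
Note that the apparent pole of $1/L(z)$ at the origin is cancelled by the simple zero of the integral, so this is a genuine power series, consistent with the analyticity of $f_{(1,1)}$ established in the Lemma.

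The decisive step is then to replace the powers of $L$ by their Stirling expansions. Using the classical generating function for the absolute Stirling numbers of the first kind, $(L(z))^{\ell}/\ell!=\sum_{m\ge\ell}(s(m,\ell)/m!)z^m$, with $\ell=k-1$, and simplifying $(k-1)!/(k\,k!)=1/k^2$, I would obtain
$$f_{(1,1)}'(z)\ =\ 2\sum_{k\ge 1}\frac{2^k-1}{k^2}\sum_{m\ge k-1}\frac{s(m,k-1)}{m!}\,z^m.$$
Extracting the coefficient of $z^{n-2}$ (so $m=n-2$, which forces $k\le n-1$) and dividing by $n-1$ then gives $\me(\tau_{n,1}\tau_{n,2})=\frac{2}{(n-1)(n-2)!}\sum_{k=1}^{n-1}(2^k-1)s(n-2,k-1)/k^2$, which is exactly (\ref{tau12exact}) because $(n-1)(n-2)!=(n-1)!$. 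As a check, for $n=2$ only $k=1$ survives and yields $\me(\tau_{2,1}\tau_{2,2})=2$, matching $\mu_2(1,1)=2$.

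The computation is essentially routine once the substitution $u=L(t)$ is in place; the only points requiring a word of care are the justification of the term-by-term integration and coefficient extraction (both legitimate since all the series involved converge absolutely on $D$, $f_{(1,1)}$ being analytic there by the Lemma) and the bookkeeping of the index shift $\ell=k-1$ together with the summation range $k\le n-1$. I expect the recognition of the Stirling-number generating function to be the conceptual heart of the argument rather than a genuine obstacle.
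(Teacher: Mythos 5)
Your proposal is correct and follows essentially the same route as the paper's proof: the substitution $u=L(t)$ turning the integrand into $(e^{2u}-e^u)/u$, termwise integration to get $f_{(1,1)}'(z)=2\sum_{k\ge 1}\frac{2^k-1}{k\,k!}(L(z))^{k-1}$, the Stirling expansion of $(L(z))^{k-1}/(k-1)!$, and coefficient extraction. The only difference is cosmetic indexing ($m=n-2$ versus the paper's $i$ with $n=i+2$).
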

\begin{remark}
   Together with the exact formula
   $\me(\tau_{n,1})=((n-1)!)^{-1}\sum_{k=1}^{n-1}s(n-1,k)/k$ for
   the mean of $\tau_{n,1}$ (see, for example, Proposition 1.2 of \cite{freundmoehle})
   it can be
   checked that ${\rm Cov}(\tau_{n,1},\tau_{n,2})=
   \me(\tau_{n,1}\tau_{n,2})-(\me(\tau_{n,1}))^2>0$ for all
   $n\ge 2$. Thus, for all $n\ge 2$, $\tau_{n,1}$ and $\tau_{n,2}$
   are positively correlated. We have used the exact formulas for
   $\me(\tau_{n,1})$ and $\me(\tau_{n,1}\tau_{n,2})$ to compute
   the entries of the following table.
   \begin{center}
      \begin{tabular}{|r|l|l|l|}
         \hline
         $n$      & $\me(\tau_{n,1})$ & $\me(\tau_{n,1}\tau_{n,2})$ & ${\rm Cov}(\tau_{n,1},\tau_{n,2})$\\
         \hline
         $2$      & $1$ & $2$ & $1$\\
         $3$      & $0.75$ & $0.75$ & $0.1875$\\
         $4$      & $0.638889$ & $0.509259$ & $0.101080$\\
         $5$      & $0.572917$ & $0.397569$ & $0.069336$\\
         $10$     & $0.431647$ & $0.215119$ & $0.028800$\\
         $100$    & $0.228368$ & $0.057067$ & $0.004915$\\
         \hline
      \end{tabular}\\

      \vspace{1mm}

      Table 2: Covariance of $\tau_{n,1}$ and $\tau_{n,2}$
      for the Bolthausen--Sznitman coalescent
   \end{center}
\end{remark}
\begin{proof} (of Corollary \ref{corollary32})
   We write $f:=f_{(1,1)}$ for convenience.
   The substitution $u=L(t)=-\log(1-t)$ below the second integral
   in (\ref{particular}) yields
   \begin{eqnarray*}
      f'(z)
      & = & \frac{2}{L(z)}\int_0^{L(z)}\frac{e^{2u}-e^u}{u}\,{\rm d}u\\
      & = & \frac{2}{L(z)}\int_0^{L(z)}\frac{1}{u}
            \bigg(\sum_{k=0}^\infty\frac{(2u)^k}{k!}- \sum_{k=0}^\infty\frac{u^k}{k!}\bigg)\,{\rm d}u\\
      & = & \frac{2}{L(z)} \sum_{k=1}^\infty \frac{2^k-1}{k!}
            \int_0^{L(z)} u^{k-1}\,{\rm d}u\\
      & = & \frac{2}{L(z)}\sum_{k=1}^\infty \frac{2^k-1}{k!}\frac{{(L(z))^k}}{k}\\
      & = & 2\sum_{k=1}^\infty \frac{2^k-1}{k k!}(L(z))^{k-1}.
   \end{eqnarray*}
   From (see \cite[p.~824]{abramowitzstegun})
   $(L(z))^k/k!=\sum_{i=k}^\infty z^i/i!s(i,k)$ we conclude that
   $$
   f'(z)
   \ =\ 2\sum_{k=1}^\infty \frac{2^k-1}{k^2}\sum_{i=k-1}^\infty \frac{z^i}{i!}s(i,k-1)
   \ =\ 2\sum_{i=0}^\infty \frac{z^i}{i!}\sum_{k=1}^{i+1}\frac{2^k-1}{k^2}s(i,k-1).
   $$
   For a power series $g(z)=\sum_{n=0}^\infty g_nz^n$ we denote in the following
   with $[z^n]g(z):=g_n$ the coefficient in front of $z^n$ in the series
   expansion of $g$. Using this notation we obtain
   $$
   (i+1)\me(\tau_{i+2,1}\tau_{i+2,2})
   \ =\ [z^i]f'(z)
   \ =\ \frac{2}{i!}\sum_{k=1}^{i+1}\frac{2^k-1}{k^2}s(i,k-1).
   $$
   It remains to divide by $i+1$ and to substitute $n=i+2$.\hfill$\Box$
\end{proof}
\subsection{Proofs of Theorem \ref{theorem12}, Corollary \ref{corollary13},
and Corollary \ref{corollary14}} \label{furtherproofs}
\setcounter{theorem}{0}
\begin{proof} (of Theorem \ref{theorem12})
   Let us verify (\ref{bsasy}) by induction on the degree $d:=k_1+\cdots+k_j$.
   Obviously (\ref{bsasy}) holds for $d=0$, i.e. for all
   $j\in\nz$ and $k_1=\cdots=k_j=0$. In order to verify (\ref{bsasy})
   for $d=1$ it suffices to show that $a_n:=\mu_n(1)\sim 1/\log n$ as
   $n\to\infty$ since $\mu_n(k)=\mu_n(1)$ for all
   $k=(k_1,\ldots,k_j)\in\nz_0^j$ satisfying $k_1+\cdots+k_j=1$.
   By (\ref{particular}) and de l'Hospital's rule
   $$
   f_1(z)\ =\ \int_0^z\frac{t}{(1-t)^2L(t)}\,{\rm d}t
   \ \sim\ \frac{1}{(1-z)L(z)},\qquad z\nearrow 1.
   $$
   Since $a_n=\me(\tau_{n,1})$ is non-increasing in $n$,
   Karamata's Tauberian theorem for power series
   \cite[p.~40, Corollary 1.7.3]{binghamgoldieteugels}, applied with
   $c:=\rho:=1$ and $l(x):=1/\log x$, yields $a_n\sim l(n)=1/\log n$.
   Thus, (\ref{bsasy}) holds for $d=1$.

   In order to verify the induction step from $d-1$ to $d>1$ fix
   $k=(k_1,\ldots,k_j)\in\nz_0^j$ with $d:=k_1+\cdots+k_j>1$.
   We can and do assume without loss of generality that
   $k=(k_1,\ldots,k_j)\in\nz^j$. By the induction hypothesis
   $$
   b_n\ :=\ \sum_{i=1}^j k_i\mu_n(k-e_i)
   \ \sim\ \sum_{i=1}^j k_i\frac{k_1!\cdots (k_i-1)!\cdots k_j!}{\log^{d-1}n}
   \ \sim\ \frac{jk_1!\cdots k_j!}{\log^{d-1}n},\qquad n\to\infty.
   $$
   Since $b_n$ is non-increasing in $n$, the same Tauberian theorem as
   used above for $d=1$, but now applied
   with $c:=jk_1!\cdots k_j!$, $\rho:=1$ and $l(x):=1/\log^{d-1}x$,
   yields
   $$
   b(z)\ :=\ \sum_{n=\max(2,j)}^\infty b_n z^{n-1}
   \ \sim\ \frac{jk_1!\cdots k_j!}{(1-z)(L(z))^{d-1}},\qquad z\nearrow 1.
   $$
   Note that $b(z)=\sum_{i=1}^j k_if_{k-e_i}(z)$.
   Applying de l'Hospital's rule $(j-1)$-times yields
   $$
   g_k(z)\ =\ \sum_{i=1}^j k_if_{k-e_i}^{(j-1)}(z)\ =\ b^{(j-1)}(z)
   \ \sim\ \frac{j!k_1!\cdots k_j!}{(1-z)^j(L(z))^{d-1}},\qquad z\nearrow 1.
   $$
   Thus, by (\ref{fksolution}) and by one further application of
   de l'Hospital's rule,
   $$
   f_k^{(j-1)}(z)
   \ =\ \frac{1}{(L(z))^{j-1}}\int_0^z \frac{(L(t))^{j-2}}{1-t}g_k(t)\,{\rm d}t
   \ \sim\ \frac{(j-1)!k_1!\cdots k_j!}{(1-z)^j(L(z))^d},
   \qquad z\nearrow 1.
   $$
   Using again de l'Hospital's rule $(j-1)$-times it follows that
   $$
   f_k(z)\ \sim\ \frac{k_1!\cdots k_j!}{(1-z)(L(z))^d},\qquad z\nearrow 1.
   $$
   Since $a_n:=\mu_n(k)$ is non-increasing in $n$, again Karamata's Tauberian
   theorem for power series, now applied with $c:=k_1!\cdots k_j!$, $\rho:=1$
   and $l(x):=1/\log^dx$, yields $a_n\sim k_1!\cdots k_j!/\log^dn$.\hfill$\Box$
\end{proof}
\begin{proof} (of Corollary \ref{corollary13})
   Theorem \ref{theorem12} clearly implies that, for $j\in\nz$ and
   $k_1,\ldots,k_j\in\nz_0$,
   $\me((\tau_{n,1}\log n)^{k_1}\cdots(\tau_{n,j}\log n)^{k_j})=
   (\log n)^{k_1+\cdots+k_j}\mu_n(k_1,\ldots,k_j)\to k_1!\cdots k_j!
   =\me(\tau_1^{k_1}\cdots\tau_j^{k_j})$ as $n\to\infty$.
   For all $i\in\{1,\ldots,j\}$ and all $0\le\theta<1$ we have
   $\sum_{r=0}^\infty (\theta^r/r!)\me(\tau_i^r)=\sum_{r=0}^\infty \theta^r
   =1/(1-\theta)<\infty$. Therefore (see \cite{billingsley1}, Theorems 30.1
   and 30.2 for the one-dimensional case and Problem 30.6 on p.~398 for the
   multi-dimensional case) the above convergence of moments implies the
   convergence $(\log n)(\tau_{n,1},\ldots,\tau_{n,j})\to
   (\tau_1,\ldots,\tau_j)$ in distribution as $n\to\infty$ for each $j\in\nz$.
   The convergence of all these $j$-dimensional distributions is already
   equivalent (see Billingsley \cite[p.~19]{billingsley2}) to the convergence
   of the full processes $(\log n)(\tau_{n,1},\ldots,\tau_{n,n},0,0,\ldots)
   \to(\tau_1,\tau_2,\ldots)$ in distribution as $n\to\infty$.\hfill$\Box$
\end{proof}
\begin{proof} (of Corollary \ref{corollary14})
   The external branch length $L_n^{external}$
   satisfies (see \cite[p.~2165]{moehleproper})
   $$
   \me((L_n^{external})^k)
   \ =\ \sum_{j=1}^k {n\choose j}
   \sum_{{k_1,\ldots,k_j\in\nz}\atop{k_1+\cdots+k_j=k}}
   \frac{k!}{k_1!\cdots k_j!}\,\mu_n(k_1,\ldots,k_j),
   \qquad n\in\{2,3,\ldots\}, k\in\nz.
   $$
   By Theorem \ref{theorem12},
   $\mu_n(k_1,\ldots,k_j)\sim k_1!\cdots k_j!/\log^kn$ as $n\to\infty$.
   Therefore, asymptotically
   the summand with index $j=k$ dominates the others,
   so asymptotically
   all the summands with indices $j<k$ can be disregarded.
   Thus, $\me((L_n^{external})^k)\sim {n\choose k}k!/\log^kn
   \sim n^k/\log^kn$. This convergence of all moments
   $\me((\frac{\log n}{n}L_n^{external})^k)\to 1$ as $n\to\infty$
   implies the convergence $\frac{\log n}{n}L_n^{external}\to 1$ in distribution
   (and hence in probability) as $n\to\infty$.\hfill$\Box$
\end{proof}
\begin{acknowledgement}
   The first author benefited from the support of the `Agence Nationale de
   la Recherche': ANR MANEGE (ANR09-BLAN-0215). Research was partially
   completed while the second author was visiting the Institute for
   Mathematical Sciences, National University of Singapore in 2011.
%
%
%
   The second author furthermore acknowledges the support of the Universit\'e
   Paris 13. He in particular thanks the Laboratoire Analyse, G\'eometrie et
   Applications at the Universit\'e Paris 13 for its kind hospitality during
   a one month visit in 2011, where parts of this work was done. The authors
   are indebted to an anonymous referee for his profound suggestions leading
   to a significant improvement of the article.
\end{acknowledgement}

\end{document}